\begin{document}
\newtheorem{theorem}{Theorem}
\newtheorem{lemma}[theorem]{Lemma}
\newtheorem{conjecture}[theorem]{Conjecture}
\newtheorem{cor}[theorem]{Corollary}
\newtheorem{proposition}[theorem]{Proposition}
\newtheorem{claim}[theorem]{Claim}
\newtheorem{remark}[theorem]{Remark}
\renewcommand{\P}{\mathbb{P}}
\newcommand{\R}{\mathbb{R}}
\newcommand{\T}{\mathcal{T}}
\newcommand{\B}{\mathcal{B}}
\newcommand{\A}{\mathcal{A}}
\newcommand{\G}{\mathcal{G}}
\newcommand{\F}{\mathcal{F}}
\newcommand{\Z}{\mathbb{Z}}
\newcommand{\Q}{\mathbb{Q}}
\newcommand{\E}{\mathbb E}
\newcommand{\N}{\mathbb N}
\newcommand{\1}{\mathbf 1}

\title[Preferential attachment with choice based edge-step]{Preferential attachment with choice based edge-step}
\author{Yury Malyshkin}
\address{Tver State University}
\email{yury.malyshkin@mail.ru}
\subjclass[2010]{05C80}
\keywords{random graphs, preferential attachment, power of choice, fitness}
\date{\today}

\begin{abstract}
We study the asymptotic behavior of the maximum degree in the preferential attachment model with a choice-based edge-step. We add vertex type to the model and prove, among others types of behavior, the effect of condensation on multiple vertices with different types.
\end{abstract}


\maketitle

\section{Introduction}

In the present work, we study the addition of vertex fitness to the linear preferential attachment model (see, e.g. \cite{Mori02,Mori05}) with a choice-based edge step. The addition of the vertices of different types is a natural way to model people's preferences, for example in electoral models (see, e.g., models with fitness \cite{BCDR07} or geometrical models \cite{HJY20}). 

The standard preferential attachment graph model was introduced in \cite{BA99}.  Such a graph is constructed in the following way. First, we start with some initial graph $G_1$, usually, for simplification purposes, it consists of two vertices and an edge between them. Then on each step, we add a new vertex and draw an edge from it to an existing vertex, chosen with probability proportional to its degree. Usually, one considers the rule when we choose a vertex with probability proportional to its degree plus some parameter $\beta>-1$. Such a model was widely studied (see, e.g., \cite{Hof16}, section 8) and different modifications have been introduced.

One of the modifications we consider is the introduction of a choice to the model (see, e.g., \cite{HJ16,KR14,Mal18,MP14}). In this modification, we consider the sample of $d$ independently chosen vertices and then choose the one with the largest degree. This modification often results in the effect of condensation, when a single vertex has linear (over the total number of edges) degree (see, e.g., \cite{HJY20,Mal20,MP15}). The other modification is the introduction of the edge step to the model (see, e.g., \cite{ARS21}). In this modification, we have two types of steps. The vertex step is the classical preferential attachment step when we add a new vertex and then draw edges from it to an already existing vertex. The other type of step is an edge step. In this step, we draw edges between already existing vertices, chosen with probabilities proportional to their degrees.

Let us introduce our model. Fix $k,m,d,T\in\N$, $d>1,$ $\beta>-1$ and $p_i\in(0,1),$ $i=1,...,T$, $\sum_{i=1}^{T}p_i=1,$ which are parameters of our model. We consider a sequence of graphs $G_n$, with $G_n$ containing vertices $v_1,...,v_n$. We consider i.i.d. random variables $X_1,X_2,...$ with distributions $P(X_1=i)=p_i,$  such that $X_i$ corresponds to $v_i$ and represents the type (fitness) of a vertex. We start with the initial graph $G_1$ that consists of a vertex $v_1$.  To build graph $G_{n+1}$ from $G_n$ we add a vertex $v_{n+1}$ and draw edges in two steps.
\begin{itemize}
\item[Vertex step:] we draw $m$ edges independently from $v_{n+1}$ to one of the vertices $v_1,v_2,...,v_n$, for each edge, the endpoint is chosen with conditional probability (given $G_n$)
\begin{equation}
    \frac{\deg_{G_n} (v_i) + \beta}{\beta n + \sum_{i=1}^{n} \deg_{G_n} (v_i)}.
\end{equation}
\item[Edge step:] we independently choose $k$ pairs $(w_n^{i},u_n^{i})$, $i=1,...,k$, of vertices of $G_{n}\cup v_{n+1}$, such that the vertex $w_n^{i}$ is chosen uniformly from all vertices and the vertex $u_n^{i}$ is chosen by the following rule. We consider a sample $y_n^{i,1},...,y_n^{i,d}$ of size $d$ of vertices of $G_{n}\cup v_{n+1}-w_n^{i}$ with the same type $t$ as $w_n^{i}$ (or with any type if there are no other vertices of type $t$), chosen with conditional (given $G_n$) probabilities
\begin{equation}
\Pr(y_n^{i,k}=v_j|\F_n)=\1\{X_j=t\}\frac{\deg_{G_n} (v_j) + \beta}{\sum_{s=1}^{n} \1\{X_s=t\}\left(\deg_{G_n} (v_s)+\beta\right)}.
\end{equation}
Here we use $\F_n$ to describe the $\sigma$-algebra generated by all the random variables used in constructing $G_n$. 
Then we draw an edge from $w_n^{i}$ to the vertex from the sample with the highest degree in $G_n$ (in case of a tie chosen randomly, it would not affect the degree distribution).
\end{itemize} 

\section{Results}

Let $D_{i}(n) = \sum_{j=1}^{n}\left(\deg_{G_n}(v_j)+\beta\right)\1\{X_j =i\}$ be the total weight of all vertices of type $i$ and $D(n)=\sum_{i=1}^{T}D_{i}(n)$. Note that $D(n)$ is the sum of weights of all vertices of the graph, so $D(n)=(2m + 2k + \beta)n$. At step $n+1$, we could increase $D_{i}(n)$ in the following ways. We could draw one of $m$ edges to the vertex of $G_n$ of type $i$ during a vertex step, with the expected number of such edges equal to $m\frac{D_{i}(n)}{D(n)}+o(e^{-cn})$. The new vertex could also be of type $i$, which happens with probability $p_i$ and results in the increase of $D_{i}(n)$ by $m+\beta$. We also could increase $D_{i}(n)$ during an edge step (if there are at least two vertices of type $i$, which happens with probability $1+o(e^{-cn})$ for some $c>0$) with expected increase (conditional on $\F_n$) equals to $2kp_in+o(n^{1/2}\ln n)$. Therefore, we get representation
$$\E\left(D_{i}(n+1)-D_{i}(n)|\F_n\right)=m\frac{D_{i}(n)}{D(n)}+(m + 2k + \beta)p_{i}n+o(n^{1/2}\ln n).$$
Hence, due to convergence theorems for stochastic approximation (see, e.g., \cite[Theorem~3.1.1]{Chen03}), 
\begin{equation}
\label{eq:D_estimate}
    D_{i}(n) = (2m + 2k + \beta)p_{i}n + o(n^{2/3}) \text{ almost surely.} 
\end{equation}

Let us formulate our main result. Let $M_1(n),...,M_T(n)$ be the highest degrees of vertices of types $1,...,T$ in $G_n$. Let define function
\begin{equation}
    \label{eq:func}
f(x):=\frac{m}{2m+2k+\beta}x+k\left(1-\left(1-\frac{x}{2m+2k+\beta}\right)^d\right)
\end{equation}
\begin{theorem}
\label{th:main}
 \begin{enumerate}
\item If $\frac{m+dk}{2m+2k+\beta}<1$ than for any $\epsilon>0$ 
$$\Pr\left(\forall n>n_0: n^{ \frac{m+dk}{2m+2k+\beta}-\epsilon}<M_i(n)<n^{ \frac{m+dk}{2m+2k+\beta}+\epsilon}\right)\to 1$$
as $n_0\to\infty$.
\item If $\frac{m+dk}{2m+2k+\beta}=1$, than almost surely
$$\liminf_{n\to\infty}\frac{M_i(n)\ln n}{n}=\frac{2p_1(2m+2k+\beta)^2}{kd(d-1)}.$$
\item If $\frac{m+dk}{2m+2k+\beta}>1$, than almost surely
$$\liminf_{n\to\infty}\frac{M_i(n)}{n}=p_ix^{\ast},$$
where $x^{\ast}$ is a unique positive root of the equation $f(x)=x$.
\end{enumerate}
\end{theorem}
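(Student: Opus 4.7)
The plan is to apply the stochastic approximation framework used for Equation~\eqref{eq:D_estimate} to the normalized maximum degree $Y_n := M_i(n)/n$. First I would compute the conditional one-step increment of the degree of a fixed vertex $v$ of type $i$ that is currently the type-$i$ maximum. Writing $D=\deg_{G_n}(v)$, the vertex step contributes $m(D+\beta)/D(n)$ in expectation, and the edge step contributes $kp_i$ times the probability that $v$ wins the tournament, namely $1-(1-(D+\beta)/D_i(n))^d$. Invoking \eqref{eq:D_estimate} to replace $D(n)$ by $(2m+2k+\beta)n$ and $D_i(n)$ by $p_i(2m+2k+\beta)n$ with relative errors of order $o(n^{-1/3})$, I obtain $\E[M_i(n+1)-M_i(n)\mid\F_n]=p_i f(M_i(n)/(p_i n))+o(n^{-1/3})$, so the drift of $Y_n$ is $(p_i f(Y_n/p_i)-Y_n)/n$ up to negligible terms.

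For Part~(1), the regime $f'(0)=(m+dk)/(2m+2k+\beta)<1$ linearizes near zero with contraction rate below one. For the upper bound I would use that, for any $\epsilon>0$, the rescaled quantity $(\deg_{G_n}(v)+\beta)/n^{f'(0)+\epsilon}$ is asymptotically a nonnegative supermartingale for each vertex $v$, so a union bound gives $M_i(n)\le n^{f'(0)+\epsilon}$ eventually almost surely. For the matching lower bound I would exploit independence of early and late random choices to show that, among the linearly many vertices of type $i$ born before time $n^{\delta}$, at least one realizes growth of order $n^{f'(0)-\epsilon}$.

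For Part~(3), concavity of $f$ together with $f(0)=0$, $f'(0)>1$, and $f(2m+2k+\beta)=m+k<2m+2k+\beta$ guarantees a unique positive fixed point $x^*$, at which $f'(x^*)<1$. Since the drift $p_i f(y/p_i)-y$ is positive on $(0,p_i x^*)$ and negative above, zero is repelling and $p_i x^*$ is attracting for the associated ODE. I would show by an escape-from-zero argument for some leading vertex of type $i$, together with the stochastic approximation theorem cited for \eqref{eq:D_estimate}, that the normalized degree of the type-$i$ leader converges almost surely to $p_i x^*$; the statement is phrased as a $\liminf$ because transient overtakings by other vertices can only raise $M_i(n)$ momentarily above this level, whereas the dominant leader pins $\liminf Y_n$ to exactly $p_i x^*$.

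The hardest part will be Part~(2), where $f'(0)=1$ and the drift vanishes to leading order, so the second-order term determines the rate. A Taylor expansion gives
\begin{equation*}
f(x)-x = -\frac{kd(d-1)}{2(2m+2k+\beta)^2}\,x^2 + O(x^3),
\end{equation*}
which turns the drift of $Y_n$ into $-(c/p_i)Y_n^2/n+O(Y_n^3/n)$ with $c=kd(d-1)/(2(2m+2k+\beta)^2)$. On the logarithmic time scale $t=\ln n$ this is the ODE $\dot y=-(c/p_i)y^2$, whose solution gives $y\ln n\to p_i/c$, matching the stated constant. The delicate technical point is upgrading this heuristic to an almost-sure $\liminf$ statement: one must control the martingale fluctuations and the $o(n^{-1/3})$ drift errors uniformly across the $\ln n$ horizon on which the logarithmic correction develops, and this tightness is what makes the critical regime substantially more demanding than the other two.
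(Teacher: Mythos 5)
Your overall route coincides with the paper's: reduce everything to a one-dimensional drift analysis of $M_i(n)$ with limiting drift $p_i f(x/p_i)-x$, treat the three regimes by the sign of $f'(0)-1$, and in the critical case extract the constant from the second-order Taylor term on the $\ln n$ time scale (your constant $p_i/c$ does match the theorem). But there is a genuine gap at the very first step: your identity $\E[M_i(n+1)-M_i(n)\mid\F_n]=p_i f(M_i(n)/(p_i n))+o(\cdot)$ is obtained by tracking \emph{one fixed} vertex that currently holds the type-$i$ maximum. When several vertices of type $i$ share the maximal degree, $M_i(n)$ increases as soon as \emph{any} of them gains an edge, so this expression is only a lower bound on the drift of $M_i(n)$. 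Consequently all of your upper bounds (the exponent in Part~(1), the exact constants in Parts~(2) and~(3)) are unjustified as written. The paper closes exactly this hole with its persistent hub lemma (Lemma~\ref{lem:hub}), proved by a Galashin-style comparison of the pair $(M_1(n),\deg_{G_n}(v))$ with a dominated two-dimensional walk as in \eqref{eq:urn_domination}--\eqref{eq:polyn_est}, which guarantees that after a random time the type-$i$ maximum sits on a single vertex, turning the drift inequality \eqref{eq:max_degree} into an equality. Your remark that "transient overtakings can only raise $M_i(n)$ momentarily" is not a substitute: without persistence of the leader the drift of the maximum could be systematically inflated by ties and repeated leadership changes.

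Two further steps would fail or are missing as sketched. For the subcritical upper bound, your per-vertex supermartingale plus a union bound does not go through on first moments: for a vertex born at time $j$ the Doob bound gives a tail of order $j^{-(f'(0)+\epsilon)}$, which is not summable over $j$ precisely because $f'(0)+\epsilon<1$; you would need higher-moment (M\'ori-type) martingales, whereas the paper instead uses the hub lemma together with the supermartingale $B_\epsilon(n)=Y(n)/n^{g'(0)+\epsilon}$ from Lemma~\ref{lem:approximation}. For the critical case you explicitly defer the hard point (almost-sure control of fluctuations over the $\ln n$ horizon); the paper resolves it constructively with the exponential supermartingales $U_c(n)=ne^{-cn/Y(n+1)}$ and $Q_c(n)$, whose drift computations give the matching lower and upper bounds on $\liminf M_i(n)\ln n/n$ for every $c$ on either side of $2/(-g''(0))$. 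Finally, note that Lemma~\ref{lem:approximation} is stated with $O(1/n)$ drift errors, so the $o(n^{-1/3})$ error you import from \eqref{eq:D_estimate} has to be absorbed the way the paper does, by working on the events $\A^i_\epsilon(n_0)$ with the perturbed functions $f_{\epsilon,i}$ and letting $\epsilon\to 0$, rather than plugged in directly.
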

Note that the unique root exists due to the concavity of $f(x)$ and the condition $f^{\prime}(0) > 1$.
This theorem shows the existence of a condensation effect on multiple vertices.

Since types of vertices are i.i.d. random vertices with a finite number of values, the number $N_i(n)$ of vertices of type $i$ in $G_n$ satisfies the law of iterated logarithm. Hence, due to \eqref{eq:D_estimate},
for any $\epsilon>0$ 
$$\Pr\left(\A^{i}_{\epsilon}(n_0)\right)\to 1\quad \text{ as }\quad n_0\to\infty$$
where 
\begin{equation}
\A^{i}_{\epsilon}(n_0)=\left\{\forall n\geq n_0, \; D_{i}(n)<(2 m+2 k+\beta + \epsilon)p_{i}n \text{ and } |N_i(n)-p_in|<\epsilon n\right\}. 
\end{equation}

\section{Stochastic approximation and auxiliary processes}
To prove the above theorem, we need the following auxiliary result from stochastic approximation processes (see Corollary 2.7 in \cite{Pem07} and \cite{Chen03}  for more details).
\begin{lemma}
\label{lem:approx}
Let $\F_n$-measurable process $Z(n)$ satisfy the following conditions:
\begin{itemize}
\item $|Z(n+1)-Z(n)|<C$ almost surely for some constant $C$.
\item $\E(Z(n+1)-Z(n)|\F_n)= f\left(\frac{Z(n)}{n}\right)+O\left(\frac{1}{n}\right)$ for some concave function $f(x)$.
\item $f(0)=0$, $f^{\prime}(0)>1$, $f(c)=0$ for some $c>0$.
\end{itemize} 
Then, almost surely,
$$\lim_{n\to\infty}\frac{Z(n)}{n}=c.$$
\end{lemma}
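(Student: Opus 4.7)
The plan is to recognise the process $Z(n)$ as a Robbins--Monro stochastic approximation and invoke the ODE method. Setting $x_n := Z(n)/n$ and $\Delta_n := Z(n+1)-Z(n)$, the identity $(n+1)x_{n+1} = nx_n + \Delta_n$ rearranges into the canonical form
\begin{equation*}
x_{n+1} - x_n \;=\; \frac{1}{n+1}\bigl(F(x_n) + \xi_{n+1} + \epsilon_n\bigr),
\end{equation*}
where $F(x) := f(x) - x$, the term $\xi_{n+1} := \Delta_n - \E(\Delta_n\mid\F_n)$ is a martingale difference bounded by $2C$, and $\epsilon_n = O(1/n)$.

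First I would analyse the associated ODE $\dot y = F(y)$. Concavity of $f$ transfers to $F$, and the hypotheses give $F(0)=0$, $F'(0)=f'(0)-1>0$ and $F(c)=0$ (reading $c$ as the nontrivial zero compatible with the conclusion). Concavity then forces $F>0$ on $(0,c)$ and $F<0$ on $(c,\infty)$, so $c$ is the unique attracting equilibrium of the flow on $(0,\infty)$ while $0$ is linearly unstable.

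Next I would control the noise in the standard way. Since $|\xi_{n+1}|\leq 2C$, the $L^2$ martingale convergence theorem yields almost sure convergence of $\sum_n \xi_{n+1}/(n+1)$, while $\sum_n|\epsilon_n|/(n+1)<\infty$. An a~priori upper bound $\limsup_n x_n \leq c+\delta$ for every $\delta>0$ follows from the strict negativity of $F$ past $c+\delta$, together with the summability of the step sizes $\sum 1/(n+1)^2 < \infty$. With these estimates the ODE tracking theorem (Bena\"{\i}m; or the corresponding statement within \cite{Pem07}) forces the $\omega$-limit set of $(x_n)$ into the zero set $\{0,c\}$ of $F$, so almost surely $x_n\to 0$ or $x_n\to c$.

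The substantive step, and the one I expect to be the main obstacle, is ruling out convergence to the unstable equilibrium at $0$. Because $F'(0)>0$, Pemantle's non-convergence theorem for linearly unstable equilibria applies and delivers $\Pr(x_n\to 0)=0$, provided the noise $\xi_n$ satisfies a mild nondegeneracy condition in a neighbourhood of $0$---concretely, that with probability bounded below $\Delta_n$ is strictly positive when $x_n$ is small. Verifying this nondegeneracy, together with careful bookkeeping of the $O(1/n)$ perturbation near the unstable point, is the only delicate piece of the argument; in the intended applications to the graph model it is automatic from the positive drift contribution of each vertex step. Assembling these ingredients gives $x_n\to c$ almost surely, as claimed.
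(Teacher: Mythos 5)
First, on framing: the paper does not actually prove Lemma~\ref{lem:approx} --- it is imported from the stochastic-approximation literature (Corollary~2.7 of \cite{Pem07} and Chen's book). So your translation into the ODE method ($x_n=Z(n)/n$, vector field $F(x)=f(x)-x$, martingale-difference noise bounded by $2C$, summable perturbation, limit set contained in the zero set of $F$) is precisely the machinery the paper is pointing to, and that part of your sketch is sound. You also did the right thing in quietly repairing the statement: with drift $f(Z(n)/n)$ the relevant equilibria are the roots of $f(x)=x$, not of $f(x)=0$, and reading $c$ as the positive fixed point is the interpretation that matches Theorem~\ref{th:main}(3) and Lemma~\ref{lem:approximation}(3).

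The genuine gap is exactly the step you flag and then postpone: excluding convergence of $x_n$ to the unstable zero at $0$. This cannot be deduced from the lemma's stated hypotheses, because they are satisfied by the deterministic process $Z(n)\equiv 0$ (bounded increments, drift $0=f(0)+O(1/n)$), for which $Z(n)/n\to 0\neq c$. Consequently Pemantle's nonconvergence theorem for linearly unstable equilibria is not available as you invoke it: its noise nondegeneracy hypothesis (conditional noise pushing in both directions with strength bounded below near $0$) is indispensable and is not among the lemma's assumptions, so ``verify it in the applications'' is not a finishing touch but the missing mathematical content; as a proof of the lemma as stated, the argument cannot be closed. It is worth noting that when the paper actually needs to rule out the zero limit, inside the proof of Lemma~\ref{lem:approximation}, it does not use noise nondegeneracy at all: on the event $\{Y(n)/n\to 0\}$ it shows $A_\epsilon(n)=n^{g'(0)-\epsilon}/Y(n)$ is a supermartingale, exploiting that $Y$ is positive, integer-valued and nondecreasing (via the multiplicative estimate \eqref{eq:lower}), which gives $Y(n)\gtrsim n^{g'(0)-\epsilon}$ with exponent exceeding $1$ and contradicts $Y(n)/n\to 0$. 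To make your outline complete you must either add an explicit noise-ellipticity hypothesis near $0$ (and then Pemantle's instability theorem applies as you describe), or add structural hypotheses of the kind the paper actually uses downstream (positivity and monotonicity of the process together with a multiplicative drift bound); with either addition, your two-stage plan (convergence to the zero set, then instability of $0$) is correct.
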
 
We now use it to study the process of the specific form.

\begin{lemma}
\label{lem:approximation}
Let $\F_n$-measurable process $Y(n)$ with values in $\N$ with non-negative bounded increments (i.e. $0\leq Y(n+1)-Y(n)\leq C$) satisfy
$$\E(Y(n+1)-Y(n)|\F_n)=g\left(\frac{Y(n)}{n}\right)+O\left(\frac{1}{n}\right),$$
where $g(x)$ satisfies $g(0)=0$, $g^{\prime}(0)>0$ and $g^{\prime\prime}(x)<0$.
\begin{enumerate}
\item If $g^{\prime}(0)<1$, then for any $\epsilon>0$ 
$$\Pr\left(\forall n>n_0: n^{g^{\prime}(0)-\epsilon}<Y(n)<n^{g^{\prime}(0)+\epsilon}\right)\to 1$$
as $n_0\to\infty$.
\item If $g^{\prime}(0)=1$, then, almost surely,
$$\liminf_{n\to\infty}\frac{Y(n)\ln n}{n}=\frac{2}{-g^{\prime\prime}(0)}.$$
\item If $g^{\prime}(0)>1$, then, almost surely,
$$\liminf_{n\to\infty}\frac{Y(n)}{n}=x^{\ast},$$
where $x^{\ast}$ is a unique positive root of the equation $g(x)=x$.
\end{enumerate}
\end{lemma}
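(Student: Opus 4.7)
The proof splits naturally into the three regimes, with case~(3) the cleanest, case~(1) a standard two-sided martingale argument, and case~(2) the main technical obstacle.

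For case~(3), apply Lemma~\ref{lem:approx} directly with $f=g$. Concavity of $g$, together with $g(0)=0$ and $g'(0)>1$, ensures $g(x)>x$ for small $x>0$, while the a priori bound $Y(n)/n\le C$ coming from the bounded increments forces $g$ to cross the diagonal, yielding a unique $x^{\ast}>0$ with $g(x^{\ast})=x^{\ast}$. The lemma then gives $Y(n)/n\to x^{\ast}$ almost surely, which implies the $\liminf$ claim.

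For case~(1), the plan is to squeeze $Y(n)$ between $n^{g'(0)\pm\epsilon}$. Concavity gives $g(x)\le g'(0)x$, so
$$M(n):=\frac{Y(n)}{\prod_{k=1}^{n-1}\bigl(1+g'(0)/k\bigr)}$$
satisfies $\E[M(n+1)\mid\F_n]\le M(n)+O(n^{-1-g'(0)})$, and since this perturbation is summable, Robbins--Siegmund yields almost sure convergence of $M(n)$; as the product is $\sim c\,n^{g'(0)}$, this gives $Y(n)\le n^{g'(0)+\epsilon}$ eventually. For the lower bound I would use $g(x)\ge g'(0)x-Cx^{2}$ valid near $0$; the upper bound already forces $Y(n)/n\to 0$, so the quadratic correction is negligible for large $n$, and a symmetric submartingale argument on $Y(n)n^{-g'(0)}$ with Azuma-type control of the bounded increments delivers $Y(n)\ge n^{g'(0)-\epsilon}$ eventually. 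A minor preliminary step is required to wait until $Y$ leaves $0$, which happens almost surely because $g'(0)>0$.

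Case~(2) is the main obstacle. Setting $a:=-g''(0)>0$, Taylor expand $g(x)=x-\tfrac{a}{2}x^{2}+O(x^{3})$ and substitute $U(n):=n/Y(n)$; a direct computation gives
$$\E[U(n+1)-U(n)\mid\F_n]=\frac{a}{2n}+O\!\left(\frac{1}{nU(n)}+\frac{1}{n^{2}}\right),$$
so $U(n)-\tfrac{a}{2}\ln n$ is an approximate martingale. On the event $Y(n)\sim n/\ln n$ the increments of $U$ are $O(\ln^{2}n/n)$ and their step-variances are summable, hence $U(n)=\tfrac{a}{2}\ln n+O(1)$ almost surely, which rearranges to the stated formula with constant $2/(-g''(0))$. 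The delicacy is that $U=n/Y$ is singular when $Y$ is small, so one must first establish, by adapting the case~(1) lower-bound argument to the critical regime (where $g(x)-x$ vanishes quadratically at $0$), that $Y(n)$ eventually exceeds any fixed power of $\ln n$. Once that barrier is crossed, the drift expansion for $U$ is well defined and the Azuma-type concentration estimate closes the argument.
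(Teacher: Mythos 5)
Your case (3) and the upper half of case (1) are essentially sound and close to the paper's route (the paper runs stochastic approximation on $Y(n)/n$ and a supermartingale $Y(n)/n^{g'(0)+\epsilon}$; your product normalization with Robbins--Siegmund is an equivalent way to get the upper bound). The genuine gap is your lower bound in case (1), and with it the entry into case (2). A ``symmetric submartingale argument on $Y(n)n^{-g'(0)}$ with Azuma-type control'' cannot deliver $Y(n)\ge n^{g'(0)-\epsilon}$ eventually: a nonnegative almost-submartingale may perfectly well converge to $0$ (or to values of smaller polynomial order), and Azuma-type bounds only control deviations from the conditional drift, on a fluctuation scale that is not small compared with the starting value $Y(n_0)n_0^{-g'(0)}$; concentration alone cannot show the limit is bounded away from zero, which is exactly what is needed. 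The paper's key device, absent from your sketch, is to work with the reciprocal: $A_\epsilon(n)=n^{g'(0)-\epsilon}/Y(n)$ is shown to be a supermartingale on the relevant event (the nonnegative bounded increments let one expand $\E\left(Y(n)/Y(n+1)\mid\F_n\right)\le 1-(g'(0)-o(1))/n$ when $Y(n)/n$ is small), and its convergence gives the lower bound with high probability. The same device is what excludes $Y(n)/n\to 0$ in case (3); if you read Lemma~\ref{lem:approx} as a black box whose conclusion is convergence to the positive fixed point (its hypothesis ``$f(c)=0$'' has to be read as $f(c)=c$ for your choice $f=g$, and existence of $x^{\ast}$ is effectively part of the statement rather than a consequence of bounded increments), your case (3) is acceptable, but note the paper still supplies the reciprocal-supermartingale step to rule out the unstable zero.

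In case (2) your threshold is too weak, so the argument as stated fails. The drift of $U(n)=n/Y(n)$ contains an error term of order $1/Y(n)^{2}$ coming from the $O(1/n)$ in the hypothesis (your stated error $O\left(1/(nU(n))+1/n^{2}\right)$ omits it), and the conditional variance of the increments of $U$ is in general only bounded by a quantity of order $n/Y(n)^{3}$. If all you know is $Y(n)\ge(\ln n)^{K}$, the partial sums of these terms are of order $n/(\ln n)^{cK}\gg\ln n$, so the expansion $U(n)=\tfrac{1}{2}(-g''(0))\ln n+O(1)$ is not justified. What is needed, and what the paper uses, is the polynomial bound $Y(n)\ge n^{1-\epsilon}$ eventually with high probability, obtained by running the case (1) lower-bound machinery at $g'(0)=1$ --- that is, again through the reciprocal supermartingale you are missing. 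Once that bound is available, your drift computation for $n/Y(n)$ is essentially sound and would even give the full limit $Y(n)\ln n/n\to 2/(-g''(0))$, which implies the stated liminf; the paper instead traps the liminf with the exponential processes $ne^{-cn/Y(n)}$ for $c<2/(-g''(0))$ and $e^{cn/Y(n)}/n$ for $c>2/(-g''(0))$. So the single missing ingredient throughout is the supermartingale bound on $1/Y(n)$; without it both the case (1) lower bound and case (2) are unsupported.
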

\begin{proof}
First, we apply lemma~\ref{lem:approx}. To do so, introduce $Z(n)=\frac{Y(n)}{n}$. We get
$$\E(Z(n+1)-Z(n)|\F_n)=\frac{1}{n+1}\left(\E(Y(n+1)-Y(n)|\F_n)-Z(n)\right).$$
Hence,
$$\E(Z(n+1)-Z(n)|\F_n)=\frac{1}{n+1}\left(f(Z(n))+O\left(\frac{1}{n}\right)\right),$$
where $f(x):=g(x)-x$. Note that $f(x)$ is concave and $f^{\prime}(0)=g^{\prime(0)}-1$.
Due to results on stochastic approximation, $Z(n)$ converges almost surely to the non-negative zero set of $f(x)$. In case $g^{\prime}(0)>1$, there are two non-negative zeros: $0$ and $x^{\ast}>0$. In case $g^{\prime}(0)\leq 1$, $f(x)<0$ when $x>0$, so $Z(n)\to 0$ almost surely.

Let consider $W(n+1)=\frac{Y(n+1)}{Y(n)}$. Note that
\begin{equation}
\label{eq:upper}
\E\left(W(n+1)|\F_n\right)=1+\frac{g\left(\frac{Y(n)}{n}\right)}{Y(n)}+O\left(\frac{1}{n}\right). 
\end{equation}
We also have
$$\E\left(\frac{1}{W(n+1)}|\F_n\right)=\E\left(\frac{Y(n+1)-(Y(n+1)-Y(n))}{Y(n+1)}|\F_n\right)$$
$$=1-\E\left(\frac{Y(n+1)-Y(n)}{Y(n)}\frac{1}{1+\frac{Y(n+1)-Y(n)}{Y(n)}}|\F_n\right)$$
where $0\leq \frac{Y(n+1)-Y(n)}{Y(n)}\leq\frac{C}{Y(n)}$. Hence,
$$\E\left(\frac{1}{W(n+1)}|\F_n\right)=1-\frac{g\left(\frac{Y(n)}{n}\right)}{Y(n)}\left(1+O_{+}\left(\frac{1}{Y(n)}\right)\right)+O\left(\frac{1}{nY(n)}\right).$$
Using Taylor formula for $g(x)$, we get
\begin{equation}
\label{eq:lower}    \E\left(\frac{1}{W(n+1)}|\F_n\right)=1-\left(\frac{g^{\prime}(0)}{n}+\frac{g^{\prime\prime}(0)}{2n}\frac{Y(n)}{n}\right) \left(1+O_{+}\left(\frac{1}{Y(n)}\right)\right)+O\left(\left(\frac{Y^2(n)}{n^3}\right)\right).
\end{equation}
Let fix small $\epsilon>0$ and consider $A_{\epsilon}(n)=\frac{n^{g^{\prime}(0)-\epsilon}}{Y(n)}$. On this event, $\left\{\frac{Y(n)}{n}\to 0\right\}$ for large enough $n$ with high probability
$$\E\left(\frac{A_{\epsilon}(n+1)}{A_{\epsilon}(n)}|\F_n\right) =\left(1+\frac{1}{n}\right)^{g^{\prime}(0)-\epsilon}\E\left(\frac{1}{W(n+1)}|\F_n\right)$$
$$\leq \left(1+\frac{g^{\prime}(0)-\epsilon/2}{n}\right)\left(1-\frac{g^{\prime}(0)-\epsilon/2}{n}\right)<1.$$
Hence, on the event $\left\{\frac{Y(n)}{n}\to 0\right\}$, $A_{\epsilon}(n)$ is a supermartingale, and, by the martingale converges theorem, it converges to a random variable $R$. Therefore, $Y(n)>n^{g^{\prime}(0)-2\epsilon}$ with a high probability for large enough $n$ on $\left\{\frac{Y(n)}{n}\to 0\right\}$. In particular, that implies non-convergence of $Y(n)$ to $0$ in the case $g^{\prime}(0)>1$, which results in $Y(n)/n \to x^{\ast}$ almost surely. 

Now let us obtain the matching upper bound for the case $g^{\prime}(0)<1$. Let consider $B_{\epsilon}(n)=\frac{Y(n)}{n^{g^{\prime}(0)+\epsilon}}$.  From \eqref{eq:lower} we get (for small $\epsilon>0$)
$$\E\left(\frac{B_{\epsilon}(n+1)}{B_{\epsilon}(n)}|\F_n\right)$$
$$=\left(\frac{1}{1+\frac{1}{n}}\right)^{g^{\prime}(0)+\epsilon} \left(1+\frac{g\left(\frac{Y(n)}{n}\right)}{Y(n)}+O\left(\frac{1}{n}\right)\right)$$
$$=\left(\frac{1}{1+\frac{1}{n}}\right)^{g^{\prime}(0)+\epsilon} \left(1+\frac{g^{\prime}(0)+O\left(\frac{Y(n)}{n}\right)}{n}+O\left(\frac{1}{n}\right)\right)$$
$$=\left(1+\frac{-\epsilon+O\left(\frac{Y(n)}{n}\right)}{n}+O\left(\frac{1}{n}\right)\right)$$
$$< 1$$
for large enough $n$.
Therefore, $B_{\epsilon}(n)$ is a supermartingale, and, with high probability, $Y(n)<n^{g^{\prime}(0)+2\epsilon}$ for large enough $n$.

Finally, let us consider the case $g^{\prime}(0)=1$. Note that 
$$\E\left(Y(n+1)-Y(n)|\F_n\right)=\sum_{k}k\P\left(Y(n+1)-Y(n)=k|\F_n\right),$$
where the sum is over finite number of values that $Y(n+1)-Y(n)$ could take.
From \eqref{eq:lower} we get
Fix a constant $c>0$. Consider
$$U_c(n):=ne^{-\frac{cn}{Y(n+1)}}.$$
We get 
$$\E\left(\frac{U_c(n+1)}{U_c(n)}|\F_n\right)=\frac{n+1}{n}\E\left(e^{c\left(\frac{n}{Y(n)}-\frac{n+1}{Y(n+1)}\right)}|\F_n\right)$$
$$=\frac{n+1}{n}\E\left(e^{\frac{cn(Y(n+1)-Y(n))}{Y(n)Y(n+1)}-\frac{c}{Y(n+1)}}|\F_n\right)$$
$$=\frac{n+1}{n}\sum_{k}\left(1+e^{\frac{cnk}{Y(n)(Y(n)+k)}}-\frac{c}{Y(n)+k}\right)\P\left(Y(n+1)-Y(n)=k|\F_n\right).$$
Since for any $\epsilon>0$, $Y(n)$ grows faster than $n^{1-\epsilon}$ and $\frac{Y(n)}{n}\to 0$ almost surely, we get 
$$e^{\frac{cnk}{Y(n)(Y(n)+k)}}-\frac{c}{Y(n)+k}=1+\frac{cnk}{Y^{2}(n)}-\frac{c}{Y(n)}+o\left(\frac{1}{n}\right).$$
Also, $\sum_kk\P\left(Y(n+1)-Y(n)=k|\F_n\right)=g\left(\frac{Y(n)}{n}\right)$ and $\sum_k\P\left(Y(n+1)-Y(n)=k|\F_n\right)=1$. Using Taylor expansion of $g$, we get
$$\E\left(\frac{U_c(n+1)}{U_c(n)}|\F_n\right) =\frac{n+1}{n}\left(1+\frac{cng\left(\frac{Y(n)}{n}\right)}{Y^2(n)}--\frac{c}{Y(n)}+o\left(\frac{1}{n}\right)\right)$$
$$=\frac{n+1}{n}\left(1+\frac{cg^{\prime}(0)}{Y(n)}+\frac{cg^{\prime\prime}(0)}{2n}-\frac{c}{Y(n)}+o\left(\frac{1}{n}\right)\right)$$
$$=\frac{n+1}{n}\left(1+\frac{cg^{\prime\prime}(0)}{2n}+o\left(\frac{1}{n}\right)\right)$$
$$=1+\frac{1+cg^{\prime\prime}(0)/2}{n}+o\left(\frac{1}{n}\right).$$
As result, if $c<\frac{2}{-g^{\prime\prime}(0)}$, then $\E\left(\frac{U_c(n+1)}{U_c(n)}|\F_n\right)<1$ for large enough $n$. 
Hence, under the condition $c<\frac{2}{-g^{\prime\prime}(0)}$, $U_c(n)$ is a supermartingale, and by the martingale convergence theorem, $\sup_{n>0}U_c(n)<R_{c}$ almost surely for some random variable $R_{c}$. Therefore, 
$$Y(n)\geq\frac{cn}{\ln n - ln R_{c}}.$$
As a result, we get
$$\liminf_{n\to\infty} \frac{M(n)\ln n}{n}\geq c$$
almost surely. Since we could take any $c<\frac{2}{-g^{\prime\prime}(0)}$, we get
$$\liminf_{n\to\infty} \frac{M(n)\ln n}{n}\geq \frac{2}{g^{\prime\prime}(0)}.$$
To get the matching upper bound consider
$$Q_c(n):=\frac{e^{\frac{cn}{Y(n+1)}}}{n}.$$
Similar to the $Q_c(n)$ (only sign in exponent and fraction in front of it have changed), we get
$$\E\left(\frac{Q_c(n+1)}{Q_c(n)}|\F_n\right)=\frac{n}{n+1}\E\left(e^{c\left(\frac{n+1}{Y(n+1)}-\frac{n}{Y(n)}\right)}|\F_n\right)$$
$$=\frac{n}{n+1}\E\left(e^{\frac{cn(Y(n+1)-Y(n))}{Y(n)Y(n+1)}-\frac{c}{Y(n+1)}}|\F_n\right)$$
$$=1+\frac{-1-cg^{\prime\prime}(0)/2}{n}+o\left(\frac{1}{n}\right).$$
As a result, if $c>\frac{2}{-g^{\prime\prime}(0)}$, then $\E\left(\frac{U_c(n+1)}{U_c(n)}|\F_n\right)<1$ for large enough $n$.
Hence,
$$\liminf_{n\to\infty} \frac{M(n)\ln n}{n}\leq c$$
for all $c>\frac{2}{-g^{\prime\prime}(0)}$.
Therefore, 
$$\liminf_{n\to\infty} \frac{M(n)\ln n}{n}\leq \frac{2}{g^{\prime\prime}(0)},$$
which concludes the proof of the lemma.
\end{proof}

\section{Proof of the main result}

Let us consider the evolution of $M_1(n)$ (for other types the argument is the same). On step $n+1$, it could be increased in two ways.

First, we could draw an edge from $v_{n+1}$ to the vertex of type $1$ with the highest degree. The probability (conditioned on graph $G_n$) to do so (for each of $m$ possible edges) is at least (exactly, if there is a single vertex with the highest degree) $\frac{M_1(n)+\beta}{D_n}$.

Second, we could draw edges (the same procedure independently repeated $k$ times) to it during an edge step (or for it to be the initial vertex, which happens with probability $1/n$). To do so first we need the initial vertex of a pair to be a type $1$ vertex, which happens with conditional probability $\frac{N_1(n)}{n}$. Then, we need a vertex with the highest degree to appear in the sample. The probability of getting the vertex to the exact position in the sample is at least (exactly if there is a single vertex with the highest degree) $\frac{M_1(n)+\beta}{D_{1}(n)}$, so the probability for the vertex of the maximal degree to be in the sample is at least
$1-\left(1-\frac{M_1(n)+\beta}{D^{1}_n}\right)^{d}.$
As a result, we get an estimate
\begin{equation}
\label{eq:max_degree}
\E (M_1(n+1)- M_1(n)|\F_n)\geq m\frac{M_1(n)+\beta}{D_n} +k\frac{1}{n}+  k\frac{N_1(n)}{n}\left(1-\left(1-\frac{M_1(n)+\beta}{D^{1}_n}\right)^{d}\right).
\end{equation}

Hence, we have the following representation
$$\begin{gathered}
\E (M_1(n+1)- M_1(n)|\F_n)\geq m \frac{M_1(n)}{n}\left(\frac{1}{\frac{D(n)}{n}}\right) \\
 + k\frac{N_1(n)}{n} \left(1-\left(1-\frac{M_1(n)}{n}\frac{1}{\frac{D_{1}(n)}{n}}\right)^{d}\right) + O\left(\frac{1}{n}\right),
\end{gathered}$$
where inequality could be replaced with equality if $M_1(n)$ is achieved on a single vertex.
As a result, for $n\geq n_0$ we get
$$\begin{gathered}\1\{\A^1_{\epsilon}(n_0)\}\E (M_1(n+1)- M_1(n)|\F_n)\geq \1\{\A^1_{\epsilon}(n_0)\}\left(\frac{M_1(n)}{n}\left(\frac{m}{2m+2k+\beta+\epsilon}\right) \right.\\
\left. + (p_1+\epsilon) k\left(1-\left(1-\frac{M_1(n)}{n}\frac{1}{p_1(2 m+2k+\beta+\epsilon)}\right)^{d}\right) + O\left(\frac{1}{n}\right)\right)\\
=\1\{\A^1_{\epsilon}(n_0)\}\left( f_{\epsilon,1}\left(\frac{M_1(n)}{n}\right) + O\left(\frac{1}{n}\right)\right),
\end{gathered}$$
where $f_{\epsilon,1}(x)=\frac{m}{2m+2k+\beta+\epsilon}x+(p_1+\epsilon) k(1-(1-\frac{1}{p_1(2m+2k+\beta+\epsilon)}x)^d)$.
The function $f_{\epsilon,1}(x)$ is concave, $f_{\epsilon,1}(0)=0$, $f_{\epsilon,1}(x)$ converges uniformly to $p_1f\left(\frac{x}{p_1}\right)$ and $$f^{\prime}_{\epsilon,1}(0)=\frac{m +d(1+\frac{\epsilon}{p_1})k}{2m+2k+\beta+\epsilon}\to \frac{m+d}{2m+2k+\beta}.$$ 
Also, $M_1(n+1)- M_1(n)<1+2m$. Therefore, due to Lemma~\ref{lem:approx} we get lower bounds of the Theorem~\ref{th:main}.
Note that the inequality in equation \eqref{eq:max_degree} is due to the possibility of having  multiple vertices of the type $1$ with the highest degree. Hence, upper bounds of the Theorem~\ref{th:main} follows from the following lemma.

\begin{lemma}
\label{lem:hub}
For each $i$ there is a random variable $R_i$, such that for all $n\geq i$ the maximum degree $M_i(n)$ is achieved on the same unique vertex.
\end{lemma}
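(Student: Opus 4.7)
The plan is to prove the stronger statement that for every pair of distinct type-$i$ vertices $u, v$, almost surely $|\Delta_n| := |\deg_n(u) - \deg_n(v)| \to \infty$. Combined with the bounded increments $|\Delta_{n+1}-\Delta_n|\leq m+k$, this forces the sign of $\Delta_n$ to be eventually constant (once $|\Delta_n| > m+k$ a single-step sign flip is impossible); summing over the countable collection of pairs of type-$i$ vertices stabilizes their total order by degree almost surely, and $R_i$ is defined as the first time this stable order is realized.

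Fix $u = v_a$, $v = v_b$ of type $i$ and work on the event $\A^{i}_{\epsilon}(n_0)$ for small $\epsilon > 0$. The vertex step contributes exactly $m\Delta_n/D(n)$ to $\E(\Delta_{n+1}-\Delta_n\mid\F_n)$. For each of the $k$ edge steps the dominant contribution arises when the initial vertex $w$ is of type $i$ and distinct from $u, v$ (conditional probability close to $p_i$): writing $x := (\deg_n(u)+\beta)/D_i(n)$ and $y := (\deg_n(v)+\beta)/D_i(n)$, a Taylor expansion of the max-rule inclusion probabilities yields a leading positive term $d(x-y) = d\Delta_n/D_i(n)$ for the excess of $\Pr(u\text{ receives the edge})$ over $\Pr(v\text{ receives the edge})$ (the subcases $w \in \{u,v\}$ contribute only $O(1/(nD_i(n)))$). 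Summing and applying \eqref{eq:D_estimate} gives
\begin{equation*}
\E(\Delta_{n+1}-\Delta_n \mid \F_n) \geq \left(\frac{m+dk}{2m+2k+\beta} - O(\epsilon)\right) \frac{\Delta_n}{n}
\end{equation*}
on $\{\Delta_n > 0\} \cap \A^{i}_{\epsilon}(n_0)$, with the symmetric bound when $\Delta_n < 0$.

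To pass from linear drift to divergence $|\Delta_n|\to\infty$ I would use a normalized-martingale argument: setting $\alpha := (m+dk)/(2m+2k+\beta) = g'(0)$, the process $Z_n := \Delta_n/n^{\alpha}$ stopped at the first time $\Delta_n \leq 0$ is, modulo $O(\epsilon)$ drift errors, an $L^1$-bounded nonnegative submartingale, so Doob's theorem gives convergence to some $Z_\infty$ almost surely. The main obstacle is non-degeneracy of the limit, i.e., $\Pr(Z_\infty = 0 \mid \Delta_{n_0} > 0) = 0$, which is most delicate in the subcritical regime $\alpha < 1$ where $u, v$ may have large-but-sub-linear degrees. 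The standard way to close it is an exponential-supermartingale excursion bound: for suitably small $\lambda > 0$ (chosen so that the drift $\lambda c\Delta_n/n$ dominates the variance contribution $\lambda^2 C^2/2$ once $\Delta_n$ exceeds a threshold $k_0$), $\exp(-\lambda\Delta_n)$ is a supermartingale on $\{\Delta_n \geq k_0\}$, so by optional stopping the return probability from a height $k_0$ to zero is at most $e^{-\lambda k_0}$; iterating this bound over successive positive excursions of $\Delta_n$ shows the number of returns to zero is almost surely finite, and hence $Z_\infty > 0$ almost surely on $\{\Delta_{n_0} > 0\}$.
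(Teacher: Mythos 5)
Your reduction of the lemma to pairwise divergence is where the argument breaks. Even if for every fixed pair $u,v$ of type-$i$ vertices the sign of $\Delta_n=\deg_n(u)-\deg_n(v)$ is almost surely eventually constant, this does \emph{not} yield a persistent hub: the maximum is taken over a growing vertex set, and the pairwise stabilization times have no uniform bound. A scenario in which vertices $w_1,w_2,w_3,\dots$ (created later and later) successively overtake the current leader, each holding the lead on an interval $[t_k,t_{k+1})$ with $t_k\to\infty$, is perfectly consistent with every pair being eventually ordered, yet there is no single vertex achieving $M_i(n)$ for all large $n$. What is missing is a quantitative, summable estimate showing that a vertex entering with bounded degree while the current type-$i$ maximum is already $a$ has probability at most (say) $Q(a)/2^{a}$ of ever catching up, so that Borel--Cantelli leaves only finitely many candidate leaders; this is exactly the role of Corollary~8 of \cite{Gal16} in the paper's proof, which couples the pair $(M_1(n),\deg_{G_n}(v_i))$ to a two-dimensional walk dominating a P\'olya-type urn via the ratio inequality \eqref{eq:urn_domination}, and then invokes Proposition~9 there for finiteness of ties. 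Your proposal has no ingredient playing the role of that summable bound, and without it the final step ``summing over the countable collection of pairs stabilizes their total order'' is simply false as an implication.

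Two further steps are shaky even for a fixed pair. First, the edge-step drift: the excess $\Pr(u\text{ wins})-\Pr(v\text{ wins})$ has leading term $d(x-y)$ only when $u$ and $v$ sit at the top of the type-$i$ degree distribution, since winning requires that no higher-degree vertex appear in the sample; for a generic pair well below the maximum the excess is of order $(x-y)q^{d-1}$ with $q<1$ the weight fraction below $\deg_n(u)$, so the claimed lower bound with the constant $(m+dk)/(2m+2k+\beta)$ fails in general (the paper avoids this by only ever comparing a vertex with the running maximum, and by proving a ratio/domination inequality rather than an additive drift bound). Second, the excursion estimate: with a fixed $\lambda$, $\exp(-\lambda\Delta_n)$ is not a supermartingale on $\{\Delta_n\geq k_0\}$ uniformly in $n$, because the conditional second moment of an increment is of order $(\deg_n(u)+\deg_n(v))/n$, which can vastly exceed the drift $c\,\Delta_n/n$ when $\Delta_n$ is of constant size while the degrees grow; hence the asserted uniform return probability $e^{-\lambda k_0}$ does not follow, and the non-degeneracy of $Z_\infty$ remains unproved. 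Both issues are repaired in the urn-comparison framework the paper uses, so if you want to keep your route you would still need to import an estimate of the Galashin type rather than the exponential supermartingale as stated.
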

\begin{proof}
Without loss of generality put $i=1$.
 
We will consider pairs $(M_1(n), \deg_{G_n}(v_i))$ where $v_i$, $i\leq n$, is a given vertex of type $1$, to show that the highest degree is achieved on a single vertex for large enough $n$. To do so, we will use a persistent hub argument, similar to the one used in \cite{Gal16}.
In \cite{Gal16} it was proven that if we consider a two-dimensional random walk $(A_n,B_n)_{n\geq n_0}$ on $\N^2$, which could only move right or up, with 
\begin{equation}
\label{eq:urn_domination}
\Pr\left(A_{n+1}-A_n=1,B_{n+1}-B_n=0|A_n,B_n\right)\geq\frac{A_n+\beta}{A_n+B_n+2\beta},
\end{equation}
$$
\Pr\!\left(A_{n+1}\!\!-\!\!A_n\!=\!1,\!B_{n+1}\!\!-\!\!B_n\!=\!0|A_n,\!B_n\right)
\!+\!\Pr\!\left(A_{n+1}\!\!-\!\!A_n\!=\!0,\!B_{n+1}\!\!-\!\!B_n\!=\!1|A_n,\!B_n\right)\!=\! 1$$
then
\begin{equation}
\label{eq:polyn_est}
    \Pr\left(\exists n:A_n=B_n|A_{n_0}=a,B_{n_0}=1\right)\leq\frac{Q(a)}{2^a}
\end{equation} 
for some polynomial $Q(a)$ (Corollary 8) and the number of moments $n$ with $A_n=B_n$ is finite almost surely for any starting point $(A_{n_0},B_{n_0})$ (Proposition 9).

When applying these results to a pair $(M_1(n), \deg_{G_n}(v_i))$, the estimate \eqref{eq:polyn_est} then implicates that almost surely only a finite number of vertices could become the highest degree vertices among vertices of their type. The second statement would result in only a finite number of changes of leadership, and, hence, after some (random) time, the highest degree is achieved on a single vertex (i.e. the probability that there are no changes of leadership after time $n$ turns to $1$ as $n\to\infty$).

Let $u$ be a vertex of type $1$ that does not have a maximal degree. We check the first inequality from \eqref{eq:urn_domination} by proving such inequality for both steps of adding an edge. In the case we draw an edge from a new vertex to either $u$ or the vertex with the highest degree, probabilities to do so would be exactly $\frac{\deg_{G_n}(u)+\beta}{D(n)}$ for $u$ and at least $\frac{M_1(n)+\beta}{D(n)}$ for a vertex with the highest degree, so the first condition holds. For an edge step, we could increase the degree by choosing a vertex $u$ as the first vertex with probability $\frac{1}{n}$ or by choosing it from the sample. To choose $u$ from the sample, we need at least the vertex with the highest degree to not be in the sample. Let's assume that in this case if $u$ is present in the sample we choose $u$. Such an assumption would increase the probability of choosing $u$. Hence, the conditional average increase of $\deg_{G_n}(u)$ by adding an edge during an edge step would be at most
$$\begin{gathered}
\Pr\left(u=u_{n}^{i}\text{ or } u=w_n^{i}|\F_n\right)\\
\leq\frac{1}{n}+\frac{D_{1}(n)}{D(n)}\left(\left(1-\frac{M_1(n) + \beta}{D_1(n)}\right)^d-\left(1-\frac{M_1(n)+\deg_{G_n}(u) + 2\beta}{D_1(n)}\right)^d\right).
\end{gathered}$$
For the $M_1(n)$, the same increase would be at least
$$\begin{gathered}
\Pr\left(\left\{\deg_{G_n}(u_{n}^{i})=M_1(n)\text{ or } \deg_{G_n}(w_{n}^{i})=M_1(n)\right\}\text{ and } \left\{w_{n}^i \text{ is of type }1\right\}|\F_n\right)\\
\geq\frac{1}{n}+\frac{D_{1}(n)}{D(n)}\left(1-\left(1-\frac{M_1(n) + \beta}{D_1(n)}\right)^d\right).
\end{gathered}$$
Let divide the first increase by the second and prove that it is at most $\frac{\deg_{G_n}(u) + \beta}{M_1(n)+\beta}$, which would result in the existence of the persistent hub. For simplicity denote $B_n:=\deg_{G_n}(u) +\beta$, $C_n:=M_1(n) + \beta$. We get
$$
\begin{gathered}
\frac{\frac{1}{n}+\frac{B_n}{D_1(n)}\sum_{i=0}^{d-1}\left(1-\frac{C_n}{D_1(n)}\right)^i\left(1-\frac{C_n+B_n}{D_1(n)}\right)^{d-i-1}}{\frac{1}{n}
+\frac{C_n}{D_1(n)}\sum_{i=0}^{d-1}\left(1-\frac{C_n}{D_1(n)}\right)^i}\\
=\frac{\frac{B_n}{D_1(n)}\sum_{i=0}^{d-1}\left(1-\frac{C_n}{D_1(n)}\right)^i}{\frac{1}{n}+\frac{C_n}{D_1(n)}\sum_{i=0}^{d-1}\left(1-\frac{C_n}{D_1(n)}\right)^i}\\
+\frac{\frac{1}{n}-\frac{B_n}{D_1(n)}\sum_{i=0}^{d-1}\left(1-\frac{C_n}{D_1(n)}\right)^i
\left(1-\left(1-\frac{C_n+B_n}{D_1(n)}\right)^{d-i-1}\right)}{\frac{1}{n}+\frac{C_n}{D_1(n)}
\sum_{i=0}^{d-1}\left(1-\frac{C_n}{D_1(n)}\right)^i}\\
\leq\frac{B_n}{C_n}
+\frac{\frac{1}{n}-\frac{B_n}{D_1(n)}\frac{C_n+B_n}{D_1(n)} \sum_{i=0}^{d-2}\left(1-\frac{C_n}{D_1(n)}\right)^i \sum_{j=0}^{d-i-2}\left(1-\frac{C_n+B_n}{D_1(n)}\right)^{j}}{\frac{1}{n}+\frac{C_n}{D_1(n)}\sum_{i=0}^{d-1} \left(1-\frac{C_n}{D_1(n)}\right)^i}.
\end{gathered}
$$
Note that $\frac{C_n+B_n}{D_1(n)}$ is separated from $1$. Hence there is a constant $c>0$, such that almost surely
$$\frac{1}{D_1(n)}\frac{C_n\!+\!B_n}{D_1(n)}
\sum_{i=0}^{d-2}\left(1-\frac{C_n}{D_1(n)}\right)^i
\sum_{j=0}^{d-i-2}\!\left(1-\frac{C_n\!+\!B_n}{D_1(n)}\right)^{j}\geq \frac{c}{n}.$$
Therefore, if $\deg_{G_n}(u) + \beta >\frac{1}{c}$, the last term is negative and we get the needed estimate. The condition $\deg_{G_n}(u) + \beta >\frac{1}{c}$ is not significant in random walk estimates, since it only affects the starting point by a constant (we start from $(A,1/c)$ instead of $(A,1)$).

Hence, the number of vertices that could achieve a maximum degree and the number of changes of degree leadership between them is almost surely finite. Therefore, there exists a random variable $N_1$, such that $M_1(n)$ is achieved on the same single vertex for all $n>N_1$. As result, we could replace inequality in \eqref{eq:max_degree} by equality for $n>N_1$ which would not affect convergence (we could replace $\A_{\epsilon}(n_0)$ with $\A_{\epsilon}(n_0)\cap\{n>N_1\}$ in the argument).
\end{proof}

\section*{Acknowledgements.}
The presented work was funded by a grant from the Russian Science Foundation (project No. 24-21-00247).

\end{document}